\documentclass[12pt,reqno]{amsart}

\usepackage{comment}
\usepackage{amsfonts}
\usepackage{amssymb}
\usepackage{graphicx}
\usepackage{amsmath,mathtools}
\usepackage{latexsym}
\usepackage{amscd}
\usepackage{xypic}
\usepackage{mathrsfs}
\usepackage{enumitem} 
\usepackage{braket}
\usepackage[margin=1.4in]{geometry}
\usepackage{esint}
\usepackage{amsthm}
\usepackage{accents}
\usepackage[protrusion=true,expansion,stretch=5]{microtype}
\usepackage{tikz}
\usetikzlibrary{snakes}
\usepackage{parskip}
\usepackage{stmaryrd}

\usepackage[pdftex]{hyperref}

\setlist{itemsep=2pt}

\allowdisplaybreaks


\newtheorem{prop}{Proposition}
\newtheorem{theo}[prop]{Theorem}

\theoremstyle{definition}

\newtheorem{rema}[prop]{Remark}

\numberwithin{prop}{section}
\numberwithin{equation}{section}


\newcommand{\CC}{\mathbf{C}}

\newcommand{\RR}{\mathbf{R}}
\renewcommand{\SS}{\mathbf{S}}

\newcommand{\YY}{\mathbf{Y}}
\newcommand{\ZZ}{\mathbf{Z}}




\DeclareMathOperator{\length}{length}

\DeclareMathOperator{\dist}{dist}


%
%
\newcommand{\red}[1]{\textcolor{red}{#1}}

\newcommand{\source}{\textcolor{red}{source}}
\newcommand{\acosh}{\text{arccosh}}
\newcommand{\nl}{\newline}

\DeclareMathOperator{\sys}{sys}

\allowdisplaybreaks

\begin{document}

\title[Almgren's Three-Legged Starfish]{Almgren's Three-Legged Starfish}

\author{Christos Mantoulidis}
\address{Department of Mathematics, Rice University, Houston, TX 77005}
\email{christos.mantoulidis@rice.edu}

\author{Jared Marx-Kuo}
\address{Department of Mathematics, Rice University, Houston, TX 77005}
\email{jm307@rice.edu}

\begin{abstract}
    In this note we use classical tools from min-max and hyperbolic geometry to substantiate a folklore example in Almgren--Pitts min-max theory, the three-legged starfish metric on a 2-sphere, whose systolic length, Almgren--Pitts width, and Gromov--Guth width are attained by ``figure-eight'' geodesics. We also recover a hyperbolic geometry fact about ``figure-eight'' geodesics using min-max.
\end{abstract}

\maketitle

\section{Introduction}

In 1965, Almgren \cite[\S 15-8]{almgren1965theory} (cf. \cite[p. 20-21]{pitts1981existence}) proposed the existence of a Riemannian metric on $\SS^2$ such that (what is now referred to as) the ``Almgren--Pitts min-max width'' of the metric was realized by a ``figure-eight'' (``$8$'') geodesic. The metric was dubbed the ``three-legged starfish'' due to its resemblance.  While easy to visualize, an explicit metric was never constructed. Nonetheless, the expectation that a width can be realized by a figure-eight geodesic is an important feature of Almgren--Pitts min-max theory. The need to understand and control these configurations  led Pitts to his contribution of the notion of ``almost minimizing (in annuli)'' cycles (\cite[\S 1.2]{pitts1981existence}) that are seminal to the regularity aspect of Almgren--Pitts theory. 



In this note, we produce such three-legged starfish using hyperbolic geometry. 

\begin{theo} \label{theo:intro.starfish}
    The exist Riemannian metrics $g$ on $\SS^2$ obtained from the complete (finite-area) hyperbolic metric on a thrice punctured $\SS^2$ after truncating its cusps and capping with convex caps, with this property: their shortest closed geodesics are all non-simple, and are in fact isometric images of one of the three figure-eight geodesics in the hyperbolic surface. 
\end{theo}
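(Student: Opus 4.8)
The plan is to build the starfish as a hyperbolic thrice–punctured sphere with its cusps truncated very far out and filled with small round caps, and then to show that the only closed geodesics short enough to rival the figure‑eights are the figure‑eights themselves. Write $(\Sigma,h)$ for the complete finite‑area hyperbolic metric on $\SS^2\setminus\{p_1,p_2,p_3\}$, with isometry group $G$ (order $12$, containing the $S_3$ that permutes the cusps). I will take as input the classical hyperbolic fact that $(\Sigma,h)$ has no simple closed geodesics and that its systole is attained exactly by the three figure‑eight geodesics $\gamma_1,\gamma_2,\gamma_3$ — a single $G$–orbit, of common length $L_8=2\,\acosh 3$ — with every other closed geodesic strictly longer. (This is checkable arithmetically from $\Sigma=\bH/\Gamma(2)$, closed‑geodesic lengths being $2\,\acosh(|\mathrm{tr}\,M|/2)$ and the smallest trace above $2$ occurring in $\Gamma(2)$ being $6$; it is also exactly the hyperbolic fact that the min–max arguments elsewhere in this note recover.) Since the $\gamma_j$ are compact and live in the thick part, I can fix a small $s_0>0$ so that the standard cusp neighbourhoods $N_i(s_0)$ (bounded by the horocycle of length $s_0$) are embedded, pairwise disjoint, and disjoint from $\gamma_1\cup\gamma_2\cup\gamma_3$.

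For the construction: for $0<s<s_0$ set $\Sigma_s:=\Sigma\setminus\bigcup_i\mathrm{int}\,N_i(s)$, a compact surface with three horocyclic boundary circles of length $s$ and geodesic curvature $1$, whose collars satisfy $\dist_h(\partial N_i(s),\partial N_i(s_0))=\log(s_0/s)\to\infty$ as $s\to0$. I cap each boundary circle by a round spherical cap $C_i$ of radius $\rho=\rho(s)$ whose boundary has length $s$ and geodesic curvature $1$; this forces $\tan\beta=1/\rho$ and $s=2\pi\rho/\sqrt{1+\rho^2}$ (admissible whenever $s<2\pi$), with angular radius $\beta<\pi/2$, so $C_i$ is strictly convex and $\rho,\ \mathrm{diam}(C_i)\to0$. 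Gluing each cap to its horocycle rotationally, and all three $G$–equivariantly, the resulting metric $g=g_s$ on $\SS^2$ is $C^{1,1}$ across each horocycle (the Gaussian curvature jumps from $-1$ to $\rho^{-2}$; a thin‑collar smoothing, if wanted, changes nothing below), is $G$–invariant, and restricts each ``leg'' to a surface of revolution. Finally I fix $s$ small enough that $\log(s_0/s)>L_8$. These $g_s$ will be the metrics of the theorem; since each $\gamma_j$ lies in $\mathrm{int}\,\Sigma_s$ where $g_s\equiv h$, it is still a closed $g_s$‑geodesic of length $L_8$, non‑simple, and equal (hence isometric) to the figure‑eight geodesic of $(\Sigma,h)$.

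It then remains to show every closed $g_s$‑geodesic $\gamma$ has length $\ge L_8$, with equality only for $\gamma_1,\gamma_2,\gamma_3$. If $\gamma$ enters no cap interior then $\gamma\subset\Sigma_s$, and since $g_s\equiv h$ there and a geodesic tangent to a horocycle from its concave side stays on that side, $\gamma$ is a closed $h$‑geodesic, so the hyperbolic input settles this case. Otherwise $\gamma$ meets some $\mathrm{int}\,C_i$; a spherical cap of angular radius $<\pi/2$ has no closed geodesic, so $\gamma\not\subset\overline{C_i}$ and $\gamma$ crosses $\bigcup_j\partial N_j(s)$ transversally, alternating cap‑arcs with $\Sigma_s$‑arcs. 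If some $\Sigma_s$‑arc $\beta$ of $\gamma$ ever leaves $\bigcup_kN_k(s_0)$, then $\beta$ has two disjoint subarcs — one near each endpoint, lying in a closed collar $N_k(s_0)\setminus\mathrm{int}\,N_k(s)$ and running between its two boundary horocycles — each of length $\ge\log(s_0/s)$, so $\ell(\gamma)\ge 2\log(s_0/s)>L_8$. The only remaining possibility is that every $\Sigma_s$‑arc of $\gamma$ stays in a single collar; since the endpoints of each arc then pin it (and its neighbouring cap‑arcs) to one $\partial N_k(s)$, connectedness forces $\gamma$ into one of the three disjoint legs $A_k:=C_k\cup(N_k(s_0)\setminus\mathrm{int}\,N_k(s))$. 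I expect this ``trapped‑in‑a‑leg'' alternative to be the main obstacle — note that geodesics can in general pass through convex caps (e.g. the meridian of a capped cylinder), so this is not automatic. The resolution: $A_k$ is a surface of revolution — a hyperbolic cusp and a round cap, glued equivariantly — whose profile function (distance to the axis) increases strictly monotonically from $0$ at the cap's pole to $s_0/2\pi$ at $\partial A_k$; by Clairaut's relation the distance‑to‑axis function has no interior local maximum along any geodesic of $A_k$, so every geodesic of $A_k$ exits through $\partial A_k$ in finite length, and in particular $A_k$ has no closed geodesic, contradicting $\gamma\subset A_k$.

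Putting the cases together, every closed $g_s$‑geodesic has length $\ge L_8$, and one of length $\le L_8$ avoids every cap, hence is a closed $h$‑geodesic of length $\le L_8$, hence is one of $\gamma_1,\gamma_2,\gamma_3$ by the hyperbolic input; so the shortest closed $g_s$‑geodesics are exactly the three figure‑eights, which are non‑simple and isometric to the figure‑eight geodesics of $(\Sigma,h)$. Of the steps above, only the trapped‑in‑a‑leg alternative is genuinely delicate — everything else is the cited hyperbolic fact or a soft length estimate — and it is precisely the rotational symmetry of the legs, via Clairaut, that makes the convex caps harmless despite their allowing geodesic passage in general.
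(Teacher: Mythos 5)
Your proof is correct in outline, but it takes a genuinely different route from the paper's. The localization step is essentially the same in both: your collar-length estimate ($\ell(\gamma)\ge \log(s_0/s)$ for any geodesic crossing a truncated cusp) is the paper's ``Geometric Fact,'' and your Clairaut argument ruling out geodesics trapped in a leg is equivalent to the paper's maximum principle against the strictly convex foliation of each capped cusp by parallels (Clairaut's relation on a surface of revolution with monotone profile is exactly the statement that the parallels are convex, so the two arguments coincide). Where you diverge is in how the systolic geodesics of the hyperbolic region are identified. You import the full classical fact that the systole of $(\YY,h)$ is attained \emph{exactly} by the three figure-eights (Yamada's theorem, Theorem \ref{theo:starfish.systole}), whereas the paper deliberately uses only the much weaker input of Proposition \ref{prop:hyperbolic} (no simple closed geodesics; classification of single-self-intersection geodesics) and obtains the ``single self-intersection'' property of systolic geodesics from min-max, via Theorem \ref{theo:intro.width} and Calabi--Cao's \cite[Theorem 2.4]{calabi1992simple}. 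This lets the paper \emph{rederive} Theorem \ref{theo:starfish.systole} as a corollary, which is one of its stated aims; your argument forgoes that byproduct but is more elementary on the min-max side.

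Two caveats on your version. First, your glued metric is only $C^{1,1}$ across the horocycles (you match lengths and geodesic curvatures, so the curvature jump is the only defect); this is fixable by smoothing in a thin collar while keeping the parallels strictly convex, but the paper's interpolation $f_*(\rho)^2\,d\rho^2+e^{2\rho}\,d\theta^2$ from $f_*=1$ to $f_*=e^\rho$ produces a genuinely smooth metric directly and sidesteps the issue. Second, and more substantively, your input is not just the length bound $\sys(\YY,h)=2\,\acosh 3$ but the \emph{classification} of all closed geodesics attaining it; the trace computation you sketch ($|\operatorname{tr}|\ge 6$ for hyperbolic elements of $\Gamma(2)$) gives the bound but not, without enumerating the conjugacy classes of trace $\pm 6$, the statement that every length-minimizing closed geodesic is one of the three figure-eights. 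That uniqueness is precisely the delicate point the paper extracts from min-max; if you rely on it, you should cite it in full (it is in \cite{yamada1982marden}) rather than only the trace inequality.
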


Hyperbolic geometry guarantees the existence of a unique complete hyperbolic (curvature $-1$) metric on a thrice punctured $\SS^2$. The punctures turn into to cusps, and figure-eight geodesics go around pairs of cusps and with length $4 \operatorname{arcsinh} 1 = 2 \operatorname{arccosh} 3$. It is well-known that they have least length among \textit{all} closed geodesics in the surface. (See Theorem \ref{theo:starfish.systole}.) Our min-max argument, in the compactified setting necessary for Theorem \ref{theo:intro.starfish}, provides an interesting alternative approach to this  hyperbolic geometry fact, with quite minimal hyperbolic geometry input (see Proposition \ref{prop:hyperbolic}).

Now let us recall some notation:
\begin{itemize}
    \item The systolic length\footnote{In spaces other than $\SS^2$, one might require the geodesics to be homotopically nontrivial.} $\sys(\SS^2, g)$ is the least length among all closed geodesics. At least one closed geodesic exists by classical min-max work of Birkhoff \cite{birkhoff1927dynamical} using 1-parameter sweepouts in the space of closed curves in $\SS^2$.
    \item The Almgren--Pitts width $\omega_{\textnormal{AP}}(\SS^2, g)$ is the least length among all mod-2 1-cycles produced by Almgren--Pitts min-max with homotopically nontrivial loops in the space of mod-2 1-cycles in $\SS^2$.
    \item The Gromov--Guth width $\omega_{\textnormal{GG}}(\SS^2, g)$ is the least length among all mod-2 1-cycles produced by Almgren--Pitts min-max with sweepouts that cohomologically detect the generator of the space of mod-2 1-cycles in $\SS^2$. This is also known as the ``$1$-width'' in relation to the Gromov--Guth ``$p$-widths'' that were further popularized by Marques--Neves's work on minimal hypersurfaces (cf. \cite{gromov1983filling, gromov2002isoperimetry, gromov2006dimension, guth2009steenrod, marquesneves2020applications}).
\end{itemize}

The work of Calabi--Cao \cite{calabi1992simple} and an excursion into double-well phase transitions via Gaspar--Guaraco \cite{gaspar2018allen}, Mantoulidis \cite{mantoulidis2021allen} or Chodosh--Mantoulidis \cite{chodosh2023p}, and Dey \cite{dey2022apac} imply:

\begin{theo} 
\label{theo:intro.width}
    Let $g$ be a Riemannian metric on $\SS^2$. Then:
    \begin{equation} \label{eq:intro.width.leq}
        \sys(\SS^2, g) \leq \omega_{\textnormal{GG}}(\SS^2, g) \leq  \omega_{\textnormal{AP}}(\SS^2, g).
    \end{equation}
    If $\sys(\SS^2, g)$ is attained by at least one nonsimple closed geodesic, then
    \begin{equation} \label{eq:intro.width.eq}
    \sys(\SS^2, g) = \omega_{\textnormal{GG}}(\SS^2, g) = \omega_{\textnormal{AP}}(\SS^2, g).
    \end{equation}
\end{theo}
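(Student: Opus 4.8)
The plan is to prove \eqref{eq:intro.width.leq} first and then use it, together with a geometric upper bound coming from the assumed nonsimple systolic geodesic, to collapse all three quantities in \eqref{eq:intro.width.eq}. For the chain of inequalities, the right-hand inequality $\omega_{\textnormal{GG}} \leq \omega_{\textnormal{AP}}$ is essentially formal: any admissible sweepout in the homotopical Almgren--Pitts min-max (a homotopically nontrivial loop in the space of mod-$2$ $1$-cycles) is in particular cohomologically detecting, so the Gromov--Guth class of competitors contains the Almgren--Pitts one, and the infimum over the larger class is no larger. For the left-hand inequality $\sys \leq \omega_{\textnormal{GG}}$, I would invoke the comparison between Birkhoff's $1$-parameter sweepout by closed curves and the Gromov--Guth $1$-sweepout by $1$-cycles: a min-max over curves produces a closed geodesic (possibly with multiplicity/a geodesic network) of length at most $\omega_{\textnormal{GG}}$, hence $\sys \leq \omega_{\textnormal{GG}}$. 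This is where the cited machinery enters — the phase-transition route of Gaspar--Guaraco / Mantoulidis / Chodosh--Mantoulidis realizes the Gromov--Guth $1$-width as the length of a (possibly non-embedded, possibly multiplicity-$\geq 1$) closed geodesic on $\SS^2$, which by definition has length $\geq \sys$, and Calabi--Cao / Dey supply the identification in the $\SS^2$ setting; I would cite these for the precise statement that $\omega_{\textnormal{GG}}$ is attained by the length of a closed geodesic.

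For the equality case, suppose $\sys(\SS^2,g)$ is attained by a nonsimple closed geodesic $\gamma$. The key point is to use $\gamma$ to build an efficient Gromov--Guth sweepout, so that $\omega_{\textnormal{GG}}(\SS^2,g) \leq \length(\gamma) = \sys(\SS^2,g)$; combined with \eqref{eq:intro.width.leq} this forces all three to be equal. Since $\gamma$ is nonsimple it has a self-intersection, so its image is a ``figure-eight'' type graph that does \emph{not} separate $\SS^2$ — more precisely, one can choose the self-crossing and check that $\gamma$ represents the nonzero class in $H_1(\SS^2;\ZZ_2)$ of $1$-cycles, equivalently that $\SS^2 \setminus \gamma$ is connected. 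Then $\gamma$ itself, viewed as a mod-$2$ $1$-cycle, is a nontrivial cycle, and I would construct a continuous $1$-parameter family of mod-$2$ $1$-cycles sweeping out $\SS^2$ and passing through $\gamma$: push $\gamma$ off to one side into each of the two components it bounds in a neighborhood (using that a figure-eight's complement near the curve is a union of regions whose boundaries have length $\leq \length(\gamma)$), degenerating at the ends to the zero cycle. Because this sweepout is cohomologically detecting (it hits the generator, as $\gamma$ is homologically nontrivial) and every slice has mass $\leq \length(\gamma)$, we get $\omega_{\textnormal{GG}}(\SS^2,g) \leq \length(\gamma)$.

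The main obstacle I anticipate is the construction of the sweepout through $\gamma$ with the mass bound: one must check that the ``figure-eight'' can genuinely be fattened into a sweepout all of whose slices have length at most $\length(\gamma)$, rather than length temporarily exceeding it as the two loops of the eight are pushed apart and the crossing is resolved. The cleanest way is to note that resolving the self-crossing of $\gamma$ splits it into the boundaries of the (at least two) complementary regions, whose total perimeter equals $\length(\gamma)$ with no over-counting at the crossing, so one can sweep each region by level sets of a function whose sublevel perimeters are controlled — but verifying the perimeter monotonicity (e.g.\ via a careful choice of sweeping functions, or by an isoperimetric/coarea argument adapted to the figure-eight's complementary pieces) is the delicate step. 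A secondary subtlety is ensuring the resulting family is admissible for the \emph{Gromov--Guth} (cohomological) min-max and not merely the homotopical one; this is handled by checking the detecting condition at the slice containing $\gamma$, using that $[\gamma] \neq 0$ in $H_1(\SS^2;\ZZ_2)$ precisely because $\gamma$ is nonsimple and chosen to be non-separating.
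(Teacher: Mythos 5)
The first half of your proposal---the chain $\sys \leq \omega_{\textnormal{GG}} \leq \omega_{\textnormal{AP}}$---matches the paper: the right-hand inequality is the formal inclusion of competitor classes, and the left-hand one is exactly the citation of the phase-transition realization of $\omega_{\textnormal{GG}}$ by a closed geodesic (Gaspar--Guaraco, Mantoulidis, Chodosh--Mantoulidis, plus Dey to return to $1$-cycles). That part is fine.

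The equality case has genuine gaps. First, your topological justification of admissibility is wrong on its face: $H_1(\SS^2;\ZZ_2)=0$, so no $1$-cycle on $\SS^2$ is homologically nontrivial, and the complement of a figure-eight in $\SS^2$ is never connected (it has three components: the two lobes and the outside; $V=1$, $E=2$ gives $F = \chi(\SS^2) - V + E = 3$). The Gromov--Guth detecting condition is not about the homology class of an individual slice $\gamma$ in $\SS^2$; it is about the induced class in the cycle space $\mathcal{Z}_1(\SS^2;\ZZ_2)\simeq \mathbb{RP}^\infty$, verified via the Almgren isomorphism, e.g.\ by checking that the associated filling map $\SS^2\to\SS^2$ has odd degree. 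So the step ``admissibility follows because $[\gamma]\neq 0$'' does not exist as stated and must be replaced. Second, even granting the sweepout, you only bound $\omega_{\textnormal{GG}}\leq \sys$, which together with \eqref{eq:intro.width.leq} yields $\sys=\omega_{\textnormal{GG}}\leq\omega_{\textnormal{AP}}$ but \emph{not} $\omega_{\textnormal{AP}}\leq\sys$; to collapse all three you must check the family is admissible for Almgren--Pitts as well (a homotopically nontrivial loop in $\mathcal{Z}_1$), which is what the paper gets by citing the Calabi--Cao cut-and-paste construction \cite[Lemma 2.2]{calabi1992simple} directly: it produces a $1$-sweepout with all slices of length $\leq\sys(\SS^2,g)$, giving $\omega_{\textnormal{AP}}\leq\sys$. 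Third, the ``delicate step'' you flag---that every slice has length at most $\length(\gamma)$---is the actual content of that lemma and is not resolved by your sketch; the mechanism is that resolving the crossing splits $\gamma$ into boundaries of regions that are \emph{weakly convex} (geodesic arcs meeting at acute angles), so each region can be swept with monotonically non-increasing length (e.g.\ by Birkhoff curve shortening), and this convexity is where nonsimplicity is genuinely used. In short: quote or reprove Calabi--Cao's Lemma 2.2 and phrase the detecting/nontriviality condition in the cycle space, not in $H_1(\SS^2;\ZZ_2)$.
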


We note: 
\begin{enumerate}
    \item[(1)] In the setting of nonnegatively curved metrics on $\SS^2$, Calabi--Cao \cite[Theorem D]{calabi1992simple} proved that only simple closed geodesics attain $\sys(\SS^2, g)$ \textit{and} that $\sys(\SS^2, g) = \omega_{\textnormal{AP}}(\SS^2, g)$. Thus, \eqref{eq:intro.width.leq} still implies \eqref{eq:intro.width.eq}. However, \eqref{eq:intro.width.eq} is \textit{not} otherwise generally true for $(\SS^2, g)$ whose $\sys(\SS^2, g)$ is only attained by simple closed geodesics as, e.g., long rotationally symmetric dumbell metrics on $\SS^2$ (whose curvature changes sign) show.
    \item[(2)] In the setting of closed hyperbolic surfaces, Lima \cite{lima2025first} recently showed that figure-eights can attain $\omega_{\textnormal{GG}}(\Sigma, g)$ (suitably extended) for $\Sigma$ of any genus.  For certain surfaces, he proved that $\omega_{\textnormal{GG}}(\Sigma, g)$ equals the least length of any \textit{separating} closed geodesic since. (Of course, the shortest among \textit{all} closed geodesics in a closed hyperbolic surface is nonseparating.) Our work is independent of his but we were inspired by his results. 
\end{enumerate}  

We emphasize that Theorems \ref{theo:intro.starfish}, \ref{theo:intro.width} highlight the exceptional behavior of two-dimensional Almgren--Pitts min-max theory:

\begin{enumerate}
    \item[(3)] If, in dimension two, we use the mapping approach to min-max theory, as in Birkhoff's work, then Chambers--Liokumovich \cite{chambers2019optimal} proved that the min-max width of every $(\SS^2, g)$ is necessarily attained by a {simple} closed geodesic,  answering a question of Freedman.
    \item[(4)] In dimensions three through seven, the least area minimal hypersurface of any closed $(M^n, g)$ (at least one such hypersurface exists by \cite{pitts1981existence, schoensimon1981stable}) is necessarily embedded. This generalization of Calabi--Cao's two-dimensional result was proven by Song \cite{song2018embeddedness}. In positive Ricci curvature and in the same dimensions, Zhou \cite{zhou2015min} previously proved, even more so in the spirit of Calabi--Cao, that this least area will equal the Almgren--Pitts width in the absense of nonorientable hypersurfaces. 
\end{enumerate}

\textbf{Acknowledgments}. The authors thank Chris Leininger, Xin Zhou, Yevgeny Liokumovich, and Otis Chodosh for helpful discussions and references. The first author was supported by grant NSF DMS 2403728.

\section{A model}

Consider a Riemannian metric $g$ on $I \times \SS^1$, with $I \subset \RR$ an interval, of the form
\begin{equation} \label{eq:model.curv.metric}
    f(\rho)^2 \, d\rho^2 + e^{2\rho} d\theta^2,
\end{equation}
where $f : I \to (0, \infty)$ is smooth. For $f = 1$, this is a hyperbolic (curvature $-1$) metric in horocyclic coordinates, and for $f(\rho) = e^{\rho}$, it is a flat (curvature $0$) metric $dr^2 + r^2 \, d\theta^2$ after the change of variables $r = e^\rho$. An elementary computation (e.g., the first variation formula) shows that the 
the geodesic curvature of the circles $\{ \rho = \text{const} \}$ is given by
\begin{equation} \label{eq:model.curv.extrinsic}
    k = f(\rho)^{-1};
\end{equation}
that is, the geodesic curvature vector is $-k\nu = -f^{-1} f^{-1} \partial_\rho = -f^{-2} \partial_\rho$. 

\section{The hyperbolic starfish}

Let us denote by
\[ (\YY, h) = (\SS^2 \setminus \{ c_1, c_2, c_3 \}, h) \]
the $Y$-shaped complete (noncompact) hyperbolic surface obtained by endowing a thrice punctured sphere with a hyperbolic metric $h$ of curvature $-1$. This is our hyperbolic starfish and is unique up to isometries. 

The three distinguished points $c_i \in \SS^2$ produce three cusps $\CC_i \subset \YY$. We recall from elementary hyperbolic geometry that $\CC_i$ as above are pairwise disjoint and, in horocyclic coordinates about $c_i$:
\begin{equation} \label{eq:cusp.metric}
    \CC_i = (-\infty, \log 2] \times \SS^1, \; h|_{\CC_i} = d\rho^2 + e^{2\rho} \, d\theta^2.
\end{equation}
If, for $\sigma \in (-\infty, \log 2]$, we denote the ``$\sigma$-thin'' portions of $\CC_i$ by
\begin{equation} \label{eq:cusp.thin}
    \CC_i^{\leq \sigma} = \CC_i \cap \{ \rho \leq \sigma \},
\end{equation}
then it follows from \eqref{eq:cusp.metric} and \eqref{eq:model.curv.extrinsic} that 
\begin{equation} \label{eq:cusp.foliation}
    \sigma \mapsto \partial \CC_i^{\leq \sigma} \text{ are strictly $h$-convex curves foliating } \CC_i.
\end{equation}
To be clear, this means that the geodesic curvature vectors of these curves with respect to $h$ points strictly toward the noncompact end (``$c_i$'').

We record the following elementary facts from hyperbolic geometry. By slight abuse of notation we denote by $[c_1]$, $[c_2]$, $[c_3]$ representatives in $\pi_1(\YY)$ of three small consistently oriented simple closed curves around $c_1$, $c_2$, $c_3$ in $\SS^2$, then
\[ \pi_1(\YY) = \langle [c_1], [c_2], [c_3] \; | \; [c_1] [c_2] [c_3] = 1 \rangle. \]
The geodesic representatives of the free homotopy classes $[c_i][c_j]^{-1}$, $i \neq j$, have a single self-intersection and will be referred to as standard figure-eight geodesics.

\begin{prop} \label{prop:hyperbolic}
    Let $\gamma$ be a closed geodesic in $(\YY, h)$. Then:
    \begin{enumerate}
        \item $\gamma$ has at least one self-intersection.
        \item $\gamma$ has one self-intersection if and only if it is a standard figure-eight geodesic.
    \end{enumerate}
\end{prop}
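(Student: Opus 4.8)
The plan is to run Gauss--Bonnet on the complementary regions of $\gamma$ in $\SS^2$ and feed the output into the dictionary between closed geodesics and hyperbolic conjugacy classes in $\pi_1(\YY)$. I will use two standing facts from hyperbolic geometry: in a complete hyperbolic surface a free homotopy class is represented by a (necessarily unique) closed geodesic if and only if it is nontrivial and non-parabolic; and the parabolic classes of $\pi_1(\YY)$ are precisely the conjugates of nonzero powers of $[c_1], [c_2], [c_3]$. For the Gauss--Bonnet input, observe that if $R \subset \SS^2$ is a complementary region of $\gamma$ that is a topological disk carrying $n$ of the cusps $\CC_1, \CC_2, \CC_3$ in its interior, and $\partial R \subset \gamma$ has $b$ corners --- occurring only at self-intersection points of $\gamma$, with exterior angles $\beta_1, \dots, \beta_b$ --- then
\[
    \area_h(R) = \sum_{l=1}^{b} \beta_l + 2\pi(n-1),
\]
because the cusps contribute nothing to the boundary terms in the limit: by \eqref{eq:model.curv.extrinsic} their truncating horocycles have geodesic curvature $1$, and by \eqref{eq:cusp.metric} their length tends to $0$ down the cusp. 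Since every self-intersection of a closed geodesic is a transverse double point (two distinct geodesic directions; all four sector angles in $(0,\pi)$), each $\beta_l \in (0,\pi)$. Hence $\area_h(R) > 0$ forces $n \geq 2$ when $b = 0$, and $n \geq 1$ when $b \leq 2$.

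Part (1) is now immediate: if $\gamma$ were simple, $\SS^2 \setminus \gamma$ would be two disks with $b = 0$, each therefore carrying at least two cusps, for a total of at least four --- but $\YY$ has only three. So every closed geodesic of $(\YY, h)$ self-intersects.

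For (2), the reverse implication is the definition of a standard figure-eight geodesic, so assume $\gamma$ has exactly one self-intersection $p$; this is a transverse double point, since a tangential one would force two geodesic subarcs of $\gamma$ to coincide and make $\gamma$ a nontrivial iterate of a shorter geodesic, hence self-intersecting everywhere. Then $\gamma$ is topologically a wedge of two circles --- two lobes $\alpha, \beta$ --- and $\SS^2 \setminus \gamma$ consists of three open disks, which (reading off the four sectors at $p$) are $D_\alpha$ bounded by $\alpha$ with one corner, $D_\beta$ bounded by $\beta$ with one corner, and $D_0$ bounded by both lobes with two corners. Applying the displayed formula to each of these three regions, all with $b \leq 2$, forces each to contain a cusp; since they partition $\{\CC_1, \CC_2, \CC_3\}$, each contains exactly one, say $\CC_i \subset D_\alpha$ and $\CC_j \subset D_\beta$ with $i \neq j$. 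Now the two lobes freely generate $\pi_1(\YY, p)$, with $\alpha$ a simple loop around $c_i$ and $\beta$ a simple loop around $c_j$; choosing the arcs defining the generators $[c_1], [c_2], [c_3]$ adapted to $\gamma$, one gets $[\gamma] = [\alpha][\beta] = [c_i]^{\delta_\alpha} [c_j]^{\delta_\beta}$, where $\delta_\alpha, \delta_\beta \in \{\pm 1\}$ record the orientations of the lobes relative to $\SS^2$. If $\delta_\alpha = \delta_\beta$, the relator $[c_1][c_2][c_3] = 1$ makes $[\gamma]$ conjugate to $[c_k]^{\mp 1}$ (with $k$ the remaining index), hence parabolic --- impossible for a closed geodesic. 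Therefore $\delta_\alpha = -\delta_\beta$, so $[\gamma]$ is (up to inversion) the standard figure-eight class $[c_i][c_j]^{-1}$, and $\gamma$, being the unique closed geodesic in it, is a standard figure-eight geodesic.

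\textbf{Main obstacle.} I expect the bottleneck to be the combinatorial heart of (2): correctly identifying the three complementary regions of a one-double-point geodesic, with corner counts $1, 1, 2$ and all exterior angles in $(0,\pi)$, so that Gauss--Bonnet pins exactly one cusp into each lobe. After that, the identification of $[\gamma]$ is the slick step, hinging only on the fact that a product $[c_i][c_j]$ of two distinct standard peripheral loops is parabolic --- a direct consequence of the relator --- which excludes the ``peanut'' orientation of the two lobes and leaves $[c_i][c_j]^{-1}$ as the only hyperbolic option. A minor point to handle en route is that the two lobes of a figure-eight freely generate $\pi_1(\YY, p)$, compatibly with a choice of the three standard peripheral generators.
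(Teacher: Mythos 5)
Your proof is correct, and it differs from the paper's in an instructive way. For part (1) the paper simply cites Buser's Theorem 4.4.6, whereas you give a self-contained Gauss--Bonnet argument (a simple closed geodesic in $\SS^2$ would bound two cornerless disks, each forced by $\area_h(R)=2\pi(n-1)>0$ to contain two cusps, overcounting the three available); your truncation of the cusps by horocycles of curvature $1$ and vanishing length, consistent with \eqref{eq:model.curv.extrinsic} and \eqref{eq:cusp.metric}, is exactly the right way to justify the formula. For part (2), the endgame is the same as the paper's --- write $\gamma$ as a product of two simple lobes, identify each lobe as a peripheral class, and use the relator $[c_1][c_2][c_3]=1$ together with parabolicity to exclude the same-orientation product --- but your setup is different: the paper appeals to the topological fact that a simple loop in a thrice-punctured sphere is peripheral (conjugate to some $[c_{i_k}]^{\pm 1}$, possibly with $i_1=i_2$, whence the ``power of a $[c_i]$'' caveat), while your Gauss--Bonnet count of one cusp per complementary region delivers more at once: each lobe encircles exactly one puncture and the two lobes encircle \emph{distinct} punctures, so the degenerate cases never arise. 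The one step that deserves the extra care you flag is promoting ``each lobe is freely homotopic to a peripheral loop'' to the based identity $[\gamma]=[c_i]^{\delta_\alpha}[c_j]^{\delta_\beta}$ with a single generating set satisfying the relator; this is the standard change-of-coordinates principle for an embedded wedge of two circles whose complementary disks each carry one puncture (equivalently, $\YY$ deformation retracts onto $\alpha\cup\beta$, and the boundary of the two-cornered region $D_0$ exhibits $\alpha\beta$ directly as a peripheral loop about $c_k$). The paper's own proof glosses this identically, so it is not a gap relative to the paper's standard. Net effect: your argument is longer but fully self-contained, replacing the external citation for (1) and streamlining the case analysis in (2).
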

\begin{proof}
    (1) This is \cite[Theorem 4.4.6]{buser2010spectra}.

    (2) 
    %
    Having a single self-intersection implies that $\gamma = \gamma_1 \gamma_2$ where $\gamma_1$, $\gamma_2$ are simple geodesic loops with only a vertex in common. Therefore, for $k = 1, 2$, we have the homotopy class equivalence $[\gamma_k] = [c_{i_k}]^{e_k}$ for $i_k \in \{ 1, 2, 3 \}$ and $e_k = \pm 1$. Among these, the only ones that are not homotopically trivial or homotopic to a power of a $[c_i]$ are precisely $[c_i][c_j]^{-1}$, $i \neq j$.
\end{proof}

%

The following result gives an elegant description of $\sys(\YY, h)$, the least length among closed geodesics in $(\YY, h)$. We avoid relying on it and, in fact, we show in the last section how to rederive it a posteriori from our min-max conclusions.

\begin{theo}[{\cite{yamada1982marden}}]  \label{theo:starfish.systole}
    The standard figure-eight geodesics attain $\sys(\YY, h)$. 
\end{theo}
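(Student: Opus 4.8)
The plan is to uniformize $(\YY,h)$ and reduce the theorem to an elementary congruence on traces. First I would invoke the classical fact — which, together with the uniqueness of the complete hyperbolic metric on the thrice-punctured sphere recorded above, is all the hyperbolic-geometry input needed — that $(\YY,h)$ is isometric to $\mathbb{H}^2/\overline{\Gamma}$, where $\Gamma=\Gamma(2)\le \mathrm{SL}_2(\ZZ)$ is the level-$2$ principal congruence subgroup; one may take $\Gamma$ to be generated by the parabolics $T_1=\left(\begin{smallmatrix}1&2\\0&1\end{smallmatrix}\right)$ and $T_2=\left(\begin{smallmatrix}1&0\\-2&1\end{smallmatrix}\right)$, and then the three peripheral classes may be taken to be $[c_1]=T_1$, $[c_2]=T_2$, $[c_3]=(T_1T_2)^{-1}$ (note that $T_1T_2=\left(\begin{smallmatrix}-3&2\\-2&1\end{smallmatrix}\right)$ is again parabolic, of trace $-2$), so that $[c_1][c_2][c_3]=1$. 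Under this identification a closed geodesic corresponds to the conjugacy class of a hyperbolic $W\in\Gamma$ and has length $2\operatorname{arccosh}\!\big(\tfrac12|\tr W|\big)$, an increasing function of $|\tr W|$; hence $\sys(\YY,h)=2\operatorname{arccosh}\!\big(\tfrac12 m\big)$, where $m$ is the least value of $|\tr W|$ among hyperbolic elements of $\Gamma$.

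Next I would exhibit the figure-eights as candidates: the standard figure-eight around $c_1,c_2$ is the geodesic representative of $[c_1][c_2]^{-1}=T_1T_2^{-1}=\left(\begin{smallmatrix}5&2\\2&1\end{smallmatrix}\right)$, which is hyperbolic with $|\tr|=6$; the other two standard figure-eights likewise correspond to trace-$(\pm6)$ elements, either by direct computation or by the symmetry of $(\YY,h)$ permuting the three cusps. So each has length $2\operatorname{arccosh}3=4\operatorname{arcsinh}1$ — consistent with the value quoted in the introduction — and therefore $\sys(\YY,h)\le 4\operatorname{arcsinh}1$.

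The core of the argument is the matching bound $m\ge 6$, and here the point is purely arithmetic: every $W\in\Gamma(2)$ has $\tr W\equiv 2\pmod 4$. Indeed, writing $W=I+2N$ with $N\in M_2(\ZZ)$, the identity $1=\det W=1+2\tr N+4\det N$ forces $\tr N=-2\det N$ to be even, so $\tr W=2+2\tr N\equiv 2\pmod 4$. Thus $|\tr W|\in\{2,6,10,\dots\}$ for every $W\in\Gamma$, and if $W$ is hyperbolic then $|\tr W|>2$, hence $|\tr W|\ge 6$. Therefore every closed geodesic in $(\YY,h)$ has length at least $2\operatorname{arccosh}3=4\operatorname{arcsinh}1$, with equality realized by the standard figure-eight geodesics, which proves the theorem.

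I expect the only genuinely delicate points to be bookkeeping rather than substance: pinning down the identification of $(\YY,h)$ with $\mathbb{H}^2/\overline{\Gamma(2)}$ and matching the peripheral classes $[c_i]$ with the three conjugacy classes of parabolics in the correct orientations (with the wrong relative sign, $[c_1][c_2]^{\pm1}$ is the \emph{parabolic} $[c_3]^{\mp1}$, of trace $\mp2$, rather than a figure-eight), and checking that $[c_i][c_j]^{-1}$ indeed lands on a trace-$(\pm6)$ hyperbolic element rather than accidentally on a parabolic. For a reader who prefers to avoid the congruence entirely, I would also note the alternative route through Proposition \ref{prop:hyperbolic}(1): any closed geodesic $\gamma$ has a self-intersection, and splitting $\gamma$ there into two geodesic loops and lifting to $\mathbb{H}^2$ realizes the two branches of $\gamma$ through a common lift $\tilde p$ of the self-intersection point as the axes of $W$ and of a conjugate $aWa^{-1}$, crossing at some angle; a displacement estimate in $\mathrm{PSL}_2(\RR)$ then bounds $\ell(\gamma)=d(\tilde p,W\tilde p)$ below by $4\operatorname{arcsinh}1$ — the classical sharp lower bound for non-simple closed geodesics in any complete hyperbolic surface — but for this particular surface the congruence argument is shorter and delivers the equality case for free.
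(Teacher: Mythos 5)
Your proof is correct, but it takes a genuinely different route from the paper's. The paper deliberately treats Theorem \ref{theo:starfish.systole} as a \emph{consequence} of its min-max machinery: it proves it in the final section by taking a systolic geodesic of the truncated-and-capped metric $(\SS^2,h_*)$, locating it in the isometrically hyperbolic region via Theorem \ref{theo:starfish.trunc.systole}, noting it is nonsimple by Proposition \ref{prop:hyperbolic}(1), upgrading it to a width-attaining geodesic via Theorem \ref{theo:intro.width}, invoking Calabi--Cao to get a single self-intersection, and then applying Proposition \ref{prop:hyperbolic}(2). You instead uniformize $\YY$ as $\mathbb{H}^2/\overline{\Gamma(2)}$ and reduce everything to the congruence $\tr W\equiv 2\pmod 4$ on $\Gamma(2)$, which forces every hyperbolic element to have $|\tr W|\ge 6$ and hence every closed geodesic to have length at least $2\operatorname{arccosh}3$, with the trace-$6$ elements $T_iT_j^{-1}$ realizing equality; I checked the matrix computations ($T_1T_2^{-1}$ has trace $6$, $T_1T_2$ has trace $-2$, and $\det(I+2N)=1$ forces $\tr N$ even) and they are right, as is the identity $2\operatorname{arccosh}3=4\operatorname{arcsinh}1$. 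Your argument is shorter, self-contained, and yields the explicit systolic value and the sharp lower bound for \emph{all} closed geodesics directly — it is essentially the classical route of the cited reference \cite{yamada1982marden} and of your closing remark on the universal $4\operatorname{arcsinh}1$ bound for nonsimple geodesics. What it does not do, and what the paper's longer detour buys, is demonstrate that the statement can be recovered with only Proposition \ref{prop:hyperbolic} as hyperbolic input, using min-max and Calabi--Cao in place of the trace arithmetic; since rederiving the hyperbolic fact from min-max is an advertised point of the note, your proof is a valid alternative rather than a reconstruction of the intended one.
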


\section{Truncated-and-capped hyperbolic starfish}

We change the hyperbolic metric $h$ inside $\CC_i$ and replace it with a new incomplete metric on $\CC_i$ whose metric completion extends to a smooth metric on $\SS^2$.

Fix $-\infty < \rho_* < \log 2$. Modify $h$ on $\cup_{i=1}^3 \CC_i$ to equal, instead of \eqref{eq:cusp.metric},
\begin{equation} \label{eq:cusp.trunc.metric}
    h_*|_{\CC_i} = f_*(\rho)^2 \, d\rho^2 + e^{2\rho} \, d\theta^2,
\end{equation}
where $f_* : (-\infty, \log 2] \to (0, \infty)$ is a smooth function satisfying:
\begin{equation} \label{eq:f.trunc.one}
    f_*(\rho) = 1 \text{ for all } \rho \in [\rho_*, \log 2],
\end{equation}
\begin{equation} \label{eq:f.trunc.zero}
    f_*(\rho) = e^\rho \text{ for all } \rho \in (-\infty, \rho_*-1].
\end{equation}

Recall, from \eqref{eq:cusp.thin}, the notation $\CC_i^{\leq \sigma}$ for the $\sigma$-thin portion of $\CC_i$. It follows from
\eqref{eq:cusp.trunc.metric}, \eqref{eq:f.trunc.one} 
that:
\begin{equation} \label{eq:cusp.trunc.isometric}
    h_* = h \text{ on } \YY \setminus \cup_{i=1}^3 \CC_i^{\leq \rho_*},
\end{equation}
Likewise, \eqref{eq:cusp.trunc.metric} and \eqref{eq:model.curv.extrinsic} imply, for $i=1$, $2$, $3$, that
\begin{equation} \label{eq:cusp.trunc.foliation}
     \sigma \mapsto \partial \CC_i^{\leq \sigma} \text{ are strictly $h_*$-convex curves foliating } \CC_i.   
\end{equation}

It follows from \eqref{eq:cusp.trunc.metric}, \eqref{eq:f.trunc.zero} and a change of coordinates $r = e^{\rho}$ that the metric $h_*$ extends smoothly as a flat metric across the cusp points $c_i$. The metric completion is
\begin{equation} \label{eq:cusp.trunc.smooth}
    \overline{(\YY, h_*)} =  (\SS^2, h_*) \text{ with } \bar h_* \text{ smooth},
\end{equation}
where by slight abuse of notation we identify $\YY = \SS^2 \setminus \{ c_1, c_2, c_3 \} \subset \SS^2$ using the identity map. See figure \ref{fig:starfish} below. 

\begin{figure}[h!]
    \centering
    \includegraphics[scale=0.8]{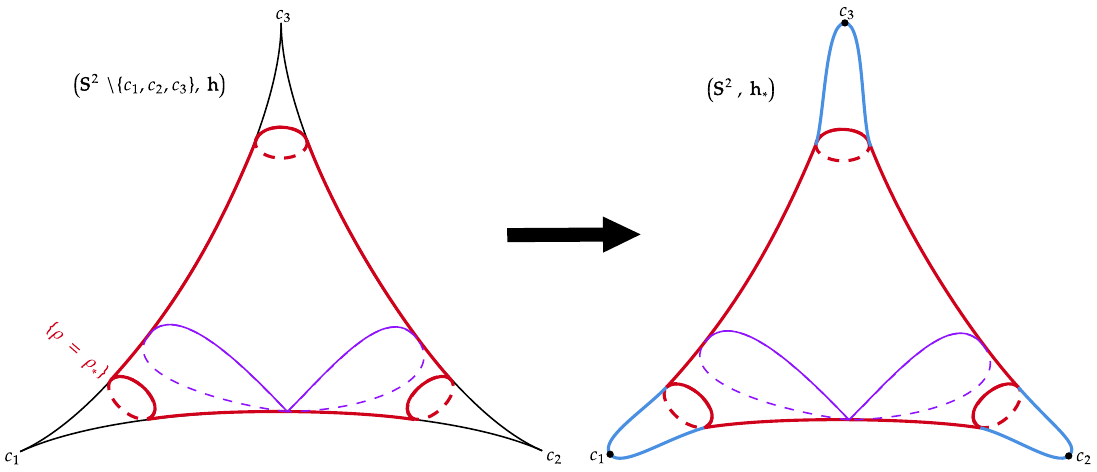}
    \caption{The construction of our smooth metric on $\SS^2$. Starting from the complete hyperbolic metric on $\SS^2 \setminus \{ c_i \}_{i=1}^3$, we modify the metric near the cusps (in coordinates, $\rho \leq \rho_*$) to form suitable caps. The tips of the caps correspond to $\{ c_i \}_{i=1}^3$, across which our metric extends smoothly. A figure-eight geodesic is highlighted.}
    \label{fig:starfish}
\end{figure}
\begin{theo} \label{theo:starfish.trunc.systole}
    For $\rho_* < \log 2 - \sys(\YY, h)$:
    
    A closed geodesic in $(\SS^2, h_*)$ has length $\sys(\SS^2, h_*)$ if and only if it is a geodesic, under the identity map, in the isometric region $(\YY \setminus \cup_{i=1}^3 \CC_i^{\leq \rho_*}, h)$.
\end{theo}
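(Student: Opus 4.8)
The plan is to reduce everything to one elementary fact: the modification of the metric near the cusps was made ``far enough in'' that no short closed geodesic can reach it. Concretely, I would first establish the \emph{length estimate:} every closed $h_*$-geodesic $\gamma$ in $\SS^2$ whose trace meets $\CC_i^{\leq \rho_*}$ for some $i$ satisfies $\ell_{h_*}(\gamma) \geq 2(\log 2 - \rho_*)$. Granting this, the theorem follows quickly. Since $\rho_* < \log 2 - \sys(\YY, h)$, we have $2(\log 2 - \rho_*) > 2\,\sys(\YY, h) > \sys(\YY, h)$. Let $\gamma_0$ be a shortest closed geodesic of $(\YY, h)$ (one exists). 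Applying the length estimate with the hyperbolic metric $h$ --- for which $f \equiv 1$ on each cusp by \eqref{eq:cusp.metric}, so the same argument applies verbatim --- and using $\ell_h(\gamma_0) = \sys(\YY, h) < 2(\log 2 - \rho_*)$, we get $\gamma_0 \subset \YY \setminus \cup_i \CC_i^{\leq \rho_*}$. There $h_* = h$ by \eqref{eq:cusp.trunc.isometric}, so $\gamma_0$ is a closed $h_*$-geodesic and $\sys(\SS^2, h_*) \leq \ell_{h_*}(\gamma_0) = \sys(\YY, h)$. Now suppose $\gamma$ is a closed $h_*$-geodesic with $\ell_{h_*}(\gamma) = \sys(\SS^2, h_*) \leq \sys(\YY, h) < 2(\log 2 - \rho_*)$; by the length estimate $\gamma$ cannot meet any $\CC_i^{\leq \rho_*}$, so $\gamma$ lies in the region where $h_* = h$ and is there a closed $h$-geodesic with $\ell_h(\gamma) = \sys(\SS^2, h_*) \leq \sys(\YY, h)$, forcing $\ell_h(\gamma) = \sys(\YY, h)$. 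Conversely, any closed $h$-geodesic attaining $\sys(\YY, h)$ again avoids $\cup_i \CC_i^{\leq \rho_*}$ by the estimate, hence by \eqref{eq:cusp.trunc.isometric} is a closed $h_*$-geodesic of length $\sys(\YY, h) = \sys(\SS^2, h_*)$. Thus $\sys(\SS^2, h_*) = \sys(\YY, h)$, and a closed $h_*$-geodesic attains $\sys(\SS^2, h_*)$ exactly when, under the identity map, it is one of the $\sys(\YY, h)$-attaining closed geodesics of the isometric region --- which is the assertion. (No property of $\sys(\YY, h)$ beyond its existence is used; in particular Theorem \ref{theo:starfish.systole} is not invoked, so that $\sys(\SS^2, h_*) = \sys(\YY, h)$ remains available for a later re-derivation of it.)

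The heart of the matter is the length estimate. Work in one cusp $\CC_i = (-\infty, \log 2] \times \SS^1$ with metric \eqref{eq:cusp.trunc.metric}, and let $\overline{\CC_i} = \CC_i \cup \{c_i\} \subset \SS^2$ be the corresponding closed disk with smooth cap tip $c_i$. By \eqref{eq:cusp.trunc.foliation} the leaves $\{\rho = \sigma\}$, $\sigma \leq \log 2$, are strictly $h_*$-convex with geodesic curvature pointing toward $c_i$, i.e.\ toward decreasing $\rho$. The maximum principle (touching comparison) then forbids any geodesic from being tangent to a leaf $\{\rho = \sigma\}$ while lying locally in the closed region $\{\rho \leq \sigma\}$: a geodesic has vanishing geodesic curvature, whereas that leaf has strictly positive geodesic curvature with respect to the unit normal pointing into $\{\rho \leq \sigma\}$. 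Two consequences: (a) no closed geodesic is contained in a sublevel disk $\overline{\CC_i^{\leq \sigma}}$ --- at its highest-$\rho$ point it would touch a leaf from the forbidden side --- so in particular $\gamma$ is not contained in $\overline{\CC_i}$ and must cross $\{\rho = \log 2\}$; and (b) along any subarc of $\gamma$ lying in $\overline{\CC_i}$, the function $\rho$ has no interior local maximum, and therefore no interior point with $\rho = \log 2$ either. Now assume $\gamma$ reaches $\rho \leq \rho_*$ somewhere in $\CC_i$. By (a), $\gamma \cap \overline{\CC_i}$ decomposes into arcs with both endpoints on $\{\rho = \log 2\}$; choose one, $\alpha$, along which $\rho$ dips to some value $m \leq \rho_*$. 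By (b), $\rho|_\alpha$ decreases monotonically from $\log 2$ to $m$ and then increases monotonically back to $\log 2$. On the two disjoint sub-arcs of $\alpha$ where $\rho$ sweeps over $[\rho_*, \log 2]$ (the descending one and the ascending one) we have $f_* \equiv 1$ by \eqref{eq:f.trunc.one}, hence $ds = \sqrt{d\rho^2 + e^{2\rho}\, d\theta^2} \geq |d\rho|$ there, so each contributes $h_*$-length at least $\log 2 - \rho_*$. Therefore $\ell_{h_*}(\gamma) \geq \ell_{h_*}(\alpha) \geq 2(\log 2 - \rho_*)$, as claimed. (Should $\gamma$ pass through $c_i$, where $\rho = -\infty$, the estimate only improves; this causes no trouble since (a) already excludes $\gamma \subset \overline{\CC_i}$.)

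The main obstacle is precisely the maximum-principle step above --- extracting from the strict convexity \eqref{eq:cusp.trunc.foliation} both that no closed geodesic is trapped in a sublevel disk and that $\rho$ has no interior maximum along a geodesic inside a cusp. Everything else is length bookkeeping, modulo two points worth flagging. First, the modified profile $f_*$ need not be $\geq 1$ throughout the cap --- indeed $f_*(\rho) = e^\rho < 1$ near $\rho = \rho_* - 1$ by \eqref{eq:f.trunc.zero} --- so the comparison $ds \geq |d\rho|$ may only be invoked on the collar $\{\rho_* \leq \rho \leq \log 2\}$, where \eqref{eq:f.trunc.one} gives $f_* \equiv 1$; this is why the bound is phrased in terms of $\rho_*$ rather than the actual depth $m$. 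Second, the existence of a shortest closed geodesic in the noncompact $(\YY, h)$, and the fact that a length-minimizing sequence cannot run off into a cusp, both follow from the same length estimate applied with $h$, which confines every sufficiently short closed geodesic to a fixed compact core of $\YY$.
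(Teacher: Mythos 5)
Your proof is correct and follows essentially the same route as the paper's: the convex foliation \eqref{eq:cusp.trunc.foliation} plus the maximum principle keeps closed geodesics from being trapped in a cap, and the $f_*\equiv 1$ collar $\{\rho_*\le\rho\le\log 2\}$ forces any geodesic reaching $\CC_i^{\le\rho_*}$ to be longer than $\sys(\YY,h)$, so systolic geodesics of both metrics live in the isometric region. The only difference is cosmetic: your monotonicity argument for $\rho$ along geodesic arcs yields a two-crossing bound $2(\log 2-\rho_*)$, whereas the paper's ``Geometric Fact'' only charges for a single crossing, which already suffices under the hypothesis $\rho_*<\log 2-\sys(\YY,h)$.
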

We rely on the following elementary geometric fact that follows from computing lengths in the hyperbolic metric \eqref{eq:cusp.metric}:

\textbf{Geometric fact}. If $\delta$ is a segment in $\CC_i \setminus \CC_i^{\leq \rho_*}$, connecting $\partial \CC_i$ to $\partial \CC_i^{\leq \rho_*}$, then
\[ \length(\delta, h) \geq \dist(\partial \CC_i, \partial \CC_i^{\leq \rho_*}, h) = \int_{\rho_*}^{\log 2} h(\partial_\rho, \partial_\rho) \, d\rho = \log 2 - \rho_*. \]

\begin{proof}
    First we show that:
    \begin{equation} \label{eq:starfish.trunc.systole.Y}
        \gamma \subseteq \YY \setminus \cup_{i=1}^3 \CC_i^{\leq \rho_*} \text{ for every systolic geodesic $\gamma$ of $(\YY, h)$.}
    \end{equation}
    We avoid using Theorem \ref{theo:starfish.systole} and opt for an elementary argument based on the Geometric Fact above that we re-use later.
    
    Suppose that \eqref{eq:starfish.trunc.systole.Y} failed and some systolic geodesic $\gamma$ in $(\YY, h)$ intersects, e.g., $\CC_1^{\leq \rho_*}$. Since
    \[ \length(\gamma, h) = \sys(\YY, h) < \log 2 - \rho_*, \]
    we deduce from the Geometric Fact above that $\gamma$ is fully contained in $\CC_1$. This contradicts the maximum principle since, by \eqref{eq:cusp.foliation},  $\CC_1$ is foliated by strictly $h$-convex curves. This completes the proof of \eqref{eq:starfish.trunc.systole.Y}. 

    Since all systolic geodesics of $(\YY, h)$ are contained in $\YY \setminus \cup_{i=1}^3 \CC_i^{\leq \rho_*}$ by \eqref{eq:starfish.trunc.systole.Y}, it follows from the isometry \eqref{eq:cusp.trunc.isometric} that they are candidates for $\sys(\SS^2, h_*)$. Thus:
    \begin{equation} \label{eq:starfish.trunc.systole.Y.leq.S2}
        \sys(\SS^2, h_*) \leq \sys(\YY, h).
    \end{equation}
    Now assume, for the sake of contradiction, that our proposition fails. There must exist a systolic geodesic $\gamma$ in $(\SS^2, h_*)$ which is not a systolic geodesic contained in the isometric region $(\YY \setminus \cup_{i=1}^3 \CC_i^{\leq \rho_*}, h)$. 
    
    We claim:
    \begin{equation} \label{eq:starfish.trunc.systole.contr}
        \gamma \cap (\cup_{i=1}^3 \CC_i^{\leq \rho_*}) \neq \emptyset.
    \end{equation}
    Otherwise,
    \[ \gamma \subset \SS^2 \setminus (\cup_{i=1}^3 \CC_i^{\leq \rho_*}) = (\YY \setminus \cup_{i=1}^3 \CC_i^{\leq \rho_*}) \cup (\cup_{i=1}^3 \{ c_i \}). \]
    Since $\cup_{i=1}^3 \{ c_i \}$ is a discrete subset of the right hand side and $\gamma$ is nonconstant, the inclusion would improve to
    \[ \gamma \subset \YY \setminus \cup_{i=1}^3 \CC_i^{\leq \rho_*}. \]
    Now using the isometry \eqref{eq:cusp.trunc.isometric} on $\YY \setminus \cup_{i=1}^3 \CC_i^{\leq \rho_*}$, and inequality \eqref{eq:starfish.trunc.systole.Y.leq.S2}, $\gamma$ would be a systolic geodesic in the isometric region $(\YY \setminus \cup_{i=1}^3 \CC_i^{\leq \rho_*}, h)$, a contradiction to our standing assumption. Therefore, \eqref{eq:starfish.trunc.systole.contr} must indeed hold.
    
    Without loss of generality, \eqref{eq:starfish.trunc.systole.contr} implies:
    \[ 
        \gamma \cap \CC_1^{\leq \rho_*} \neq \emptyset.
    \]
    It follows from the maximum principle and \eqref{eq:cusp.trunc.foliation} that $\gamma$ cannot be fully contained in $\CC_1 \cup \{ c_1 \} \subset \SS^2$. Therefore, $\gamma$ contains a geodesic segment $\delta \subset \CC_1 \setminus \CC_1^{\leq \rho_*}$ joining the curve $\partial \CC_1^{\leq \rho_*}$ to the curve $\partial (\YY \setminus \CC_1) = \partial \CC_1$. Using the isometry \eqref{eq:cusp.trunc.isometric} on $\CC_1 \setminus \CC_1^{\leq \rho_*}$, the Geometric Fact, and our choice of $\rho_*$, we deduce again:
    \[ \sys(\SS^2, h_*) = \length(\gamma, h_*) \geq \dist(\partial \CC_1, \partial \CC_1^{\leq \rho_*}, h) = \log 2 - \rho_* > \sys(\YY, h). \]
    This contradicts \eqref{eq:starfish.trunc.systole.Y.leq.S2}.
\end{proof}



\section{Theorems in introduction}

\begin{proof}[Proof of Theorem \ref{theo:intro.width}]
    Recall that $\sys(\SS^2, g)$ denotes the systolic length, and $\omega_{\textnormal{AP}}(\SS^2, g)$, $\omega_{\textnormal{GG}}(\SS^2, g)$ denote the Almgren--Pitts and Gromov--Guth widths. Clearly:
    \begin{equation} \label{eq:intro.width.GG.leq.AP}
        \omega_{\textnormal{GG}}(\SS^2, g) \leq \omega_{\textnormal{AP}}(\SS^2, g),
    \end{equation}
    since the former is an infimum over larger collection.

    It follows from the parallel theory of double-well phase transitions, specifically via Guaraco \cite{guaraco2018min} and Mantoulidis \cite{mantoulidis2021allen}, or Gaspar--Guaraco \cite{gaspar2018allen} and Chodosh--Mantoulidis \cite{chodosh2023p}, combined with Dey \cite{dey2022apac} to translate back to 1-cycles, that $\omega_{\textnormal{GG}}(\SS^2, g)$ is necessarily attained by a closed geodesic.  Therefore:
    \begin{equation} \label{eq:intro.width.sys.leq.AP}
        \sys(\SS^2, g) \leq \omega_{\textnormal{GG}}(\SS^2, g).
    \end{equation}

\begin{rema}
This inequality is not yet known with a direct argument. However, the analogous one for Almgren--Pitts widths is known by the work of Calabi--Cao \cite[Theorem 2.4]{calabi1992simple}. 
\end{rema}

    Combining \eqref{eq:intro.width.GG.leq.AP} and \eqref{eq:intro.width.sys.leq.AP} one gets the desired three-way inequality:
    \begin{equation} \label{eq:intro.width.leq.proof}
        \sys(\SS^2, g) \leq \omega_{\textnormal{GG}}(\SS^2, g) \leq \omega_{\textnormal{AP}}(\SS^2, g).
    \end{equation}
       
    For any {nonsimple} closed geodesic, the Calabi--Cao cut-and-paste construction \cite[Lemma 2.2]{calabi1992simple} 
    produces a 1-sweepout of $(\SS^2, g)$ with maximum length $\leq \sys(\SS^2, g)$. Having a nonsimple systolic geodesic to begin with implies:
    \begin{equation} \label{eq:intro.width.AP.leq.sys}
        \omega_{\textnormal{AP}}(\SS^2, g) \leq \sys(\SS^2, g).
    \end{equation}
    We conclude totally equality,
    \[ \omega_{\textnormal{AP}}(\SS^2, g) = \sys(\SS^2, g) = \omega_{\textnormal{GG}}(\SS^2, g) \] 
    by combining \eqref{eq:intro.width.leq.proof} with  \eqref{eq:intro.width.AP.leq.sys}.
\end{proof}


\begin{proof}[Proof of Theorem \ref{theo:intro.starfish} and Theorem \ref{theo:starfish.systole}]
    Invoke the construction of Theorem \ref{theo:starfish.trunc.systole} with any sufficiently negative $\rho_*$ and let $\gamma_*$ be any systolic geodesic of $(\SS^2, h_*)$. Theorem \ref{theo:starfish.trunc.systole} guarantees that $\gamma_*$ is an isometric image of a systolic geodesic $\gamma$ of our hyperbolic three-legged starfish $(\YY, h)$. 
    
    Using Proposition \ref{prop:hyperbolic}'s (1), we see that $\gamma$ is non-simple, and thus so is $\gamma_*$. Then, using Theorem \ref{theo:intro.width}, we see that $\gamma$ additionally attains the Almgren--Pitts (and Gromov--Guth) width. Therefore, \cite[Theorem 2.4]{calabi1992simple} guarantees that $\gamma$ has a single self-intersection. Finally, Proposition \ref{prop:hyperbolic}'s (2) guarantees it is (the isometric image of) one of the standard figure-eight geodesics in our hyperbolic three-legged starfish $(\YY, h)$. This completes the proof of Theorem \ref{theo:intro.starfish} with $g := h_*$. 
    
    The fact that $\gamma$ was, by construction, the isometric image of any systolic geodesic $(\YY, h)$ completes the proof of Theorem \ref{theo:starfish.systole}.
\end{proof}
\begin{rema} \label{rema:rho.infty}
    Sending $\rho_* \to -\infty$ in the contruction produces metrics $h_*$ that converge in the pointed $C^\infty_{\operatorname{loc}}$ sense to the original thrice punctured $\SS^2$ with its complete finite-area hyperbolic metric.
\end{rema}

\begin{rema}
We recall that the Calabi--Cao argument in \cite[Lemma 2.2]{calabi1992simple} constructs an explicit sweepout $\sigma:[0,1] \to \mathcal{Z}_1(\SS^2; \ZZ_2)$ that realizes a figure--eight geodesic $\gamma$ as its longest curve:
\[
\omega_{\textnormal{AP}}(\SS^2, h_*) = \omega_{\textnormal{GG}}(\SS^2, h_*) = \sys(\SS^2, h_*) = \max_{t \in [0,1]} \operatorname{length}(\sigma(t), h_*) = \operatorname{length}(\gamma, h);
\]
above, we compute the length of $\gamma$ with respect to $h$ because it is known to be in the isometrically hyperbolic region of $\SS^2$, and in particular inside $\YY$. Informally, the sweepout goes as follows. After relabeling, we may decompose $\gamma = \gamma_1 \gamma_2$ using geodesic loops $\gamma_1$, $\gamma_2$ with $[\gamma_1] = [c_1]$ and $[\gamma_2] = [c_2]^{-1}$ in $\pi(\YY)$, where $[c_1]$, $[c_2] \in \pi(\YY)$ indicate the free homotopy classes (consistently oriented) corresponding to the cusps $c_1$, $c_2$. Viewed in $\SS^2$, both $\gamma_1$, $\gamma_2$ bound domains $D_1$, $D_2 \subset \SS^2$ containing $c_1$, $c_2$ whose boundaries are geodesic loops with an acute angle. By this weak convexity of the boundary and the lack of shorter closed geodesics in $(\SS^2, h_*)$, one may sweep out $D_1$, $D_2$ (e.g., approximately with a curve shortening flow) with a continuous family $(\sigma(t))_{t \in [1/2, 1]}$ with $\sigma(1/2) = \gamma_1 \cup \gamma_2$ and $\sigma(1) = \{ c_1 \} \cup \{ c_2 \}$. On the flip side, $\gamma_1 \gamma_2^{-1}$ (note the parametrization reversal) also bounds a domain $D_{12}$ whose boundary consists of two geodesic segments and two acute angles. By the same argument, one may sweep out $D_{12}$ with a continuous family $(\sigma(t))_{t \in [0,1/2]}$ with $\sigma(1/2) = \gamma_1 \cup \gamma_2$ (we can use $+\gamma_2$ again rather than $-\gamma_2$ since $\sigma$ is a mod-2 1-cycle) and $\sigma(1) = \{ c_3 \}$. One may show that $\sigma$ induces a degree-1 map $\SS^2 \to \SS^2$, and thus a 1-sweepout.
%
\end{rema}

\bibliography{main}{}
\bibliographystyle{amsalpha}

\end{document}